\newcommand{\inj}{\text{-inj}}
\newcommand{\Sp}{\ensuremath{\textup{Sp}}}
\newcommand{\Set}{\ensuremath{\textup{\bf Set}}}
\newcommand{\op}{{\ensuremath{\textup{op}}}}
\newcommand{\dgrm}[1]{\ensuremath{\smash{\underset{\widetilde{\hphantom{#1}}}{#1}} \mathstrut}}
\newcommand {\cofib} {\ensuremath{\hookrightarrow}}
\newcommand {\trivcofib} {\ensuremath{\xhookrightarrow \sim}}
\newcommand{\orbit}[1]{\ensuremath{\overset {#1} {\underset \ast \downarrow}}}
\newtheorem {theorem1}{Theorem}[section]
\newtheorem {theorem}[theorem1]{Theorem}
\newtheorem {proposition}[theorem1]{Proposition}
\newtheorem {lemma}[theorem1]{Lemma}
\theoremstyle{definition}
\newtheorem {definition}[theorem1]{Definition}
\newtheorem {example}[theorem1]{Example}
\theoremstyle{remark}
\newtheorem {remark}[theorem1]{Remark}
\newcommand{\cat}[1]{\ensuremath{\mathscr #1}}  %\EuScript
\newcommand{\pre}[1]{\ensuremath{\mathcal{P}(\mathscr{#1}})}  %\EuScript
\newcommand{\relpre}[3]{\ensuremath{\mathcal{P}(\mathscr{#1}_{#2}},\mathscr{#3})}  %\EuScript
\newcommand{\colim}{\ensuremath{\mathop{\textup{colim}}}}
\def\Lan{\ensuremath{\textup{Lan}}}
\newcommand{\Id}{\ensuremath{\textup{Id}}}
\renewcommand{\hom}{\ensuremath{{\rm hom}}}%% redefined
\newcounter{zahl}%
    {\end{list}}%
\title{Homotopy theory of relative simplicial presheaves}
\author{Boris Chorny}
\date{\today} % delete this line to display the current date
\begin{document}
\begin{abstract}
We show that a category \cat M equipped with a model structure defined by a proper, locally small class of orbits \cat O is Quillen equivalent to the category of small relative presheaves $\relpre M{}O$.
\end{abstract}

\maketitle
%\tableofcontents

\section{Introduction}
Characterization of presheaf categories is a classical problem in category theory. Simple criteria determining if a category is equivalent to a category of presheaves were found by M.~Bunge in her Ph.D. thesis. A generalization for presheaves taking values in a closed monoidal category appeared in \cite[Corollary~4.19]{Bunge}. Bunge's theorem states that a category $\cat E$ is equivalent to a presheaf category $\Set^{\cat C^{\op}}$ if and only if it is a complete, wellpowered and co-wellpowered, coregular (epimorphisms are coequalizers) category with a generating set of abstractly unary projectives (atoms). $\cat C$ my be chosen to be the full subcategory of $\cat E$ generated by the atoms.  

%cocomplete, atomic, and regular. 

A similar question in homotopy theory was treated by W.G.~Dwyer and D.~Kan, \cite{DK}. They  introduced the concept of a subcategory $\cat O$ of \emph{orbits} (instead of atoms) in a simplicial category $\cat M$, such that $\cat M$ equipped with a \emph{set} of orbits carries a model structure Quillen equivalent to the simplicial presheaves $\pre O$ with the projective model structure. Let $\cat S$ denote the category of simplicial sets, which we shall also call spaces, and let $G$ be a (discrete or simplicial) group, then the homogeneous spaces $\cat O_{G}=\{G/H \,|\, H<G\}$ are orbits also in the sense of Dwyer and Kan. In this example the Bredon equivariant model category on $G$-spaces is Quillen equivalent to the category of simplicial presheaves $\pre{O_{\mathit{G}}}=\cat S^{\cat O^{\op}_{G}}$. Conversely, if $\cat D$ is a small simplicial category, then in the category of simplicial presheaves $\pre D$ equipped with the projective model structure, then the representable functors $\{R_{D}=\hom_{\cat D}( - , D) \,|\, D\in \cat D\}$ play the role of the orbits.

The ideas of Dwyer and Kan have many applications, variations and generalizations. 

A set of orbits $\left\{\left.\left(\bigwedge_{i=1}^{n} R^{S^0}\right)_{\text{cof}} \,\right|\, n\in \mathbb{N}\right\}$ in the category of functors from finite pointed spaces to pointed spaces allowed Dwyer and C.~Rezk to classify the polynomial functors in the sense of T.~Goodwillie, \cite{Goo:calc3}, as simplicial presheaves over this orbit category (unpublished). 

In the stable situation, similar considerations lead S.~Schwede and B.~Shipley to classify  spectral model categories with a set of compact generators $\cat G$ as categories of spectral presheaves over the endomorphism category of $\cat G$, i.e., over the category of orbits, \cite{Schwede-Shipley}. 

The construction, by Dwyer and Kan, of a model category determined by a set of orbits, \cite[2.2]{DK}, was generalized by E.~Dror Farjoun, \cite[1.3]{Farjoun}, to encompass model categories determined by proper classes of orbits with an additional property called \emph{local smallness}, see Definition~\ref{local-smallness}, which came up in the generalization of Bredon's equivariant homotopy theory to the arbitrary diagram categories by Farjoun and A.~Zabrodsky, \cite{DZ}. Therefore the resulting model category of $\cat D$-shaped diagrams is called the \emph{equivariant} model structure. 

The purpose of this work is to generalize theorem \cite[3.1]{DK} by Dwyer and Kan establishing a Quillen equivalence of a category $\cat M$ with the model structure defined by a \emph{set} of orbits $\cat O\subset \cat M$ and the category of simplicial presheaves with the projective model structure $\pre O$. We prove this theorem for a category \cat M equipped with a locally small  class of orbits, and use \emph{relative} simplicial presheaves as a Quillen equivalent model category, see Section~\ref{Relative-section}. As an application we show that the model category introduced by Dror Farjoun, \cite{Farjoun}, is Quillen equivalent to the category of \emph{relative} simplicial presheaves, even though the orbit category is no longer small.

One technical condition a model category must satisfy is completeness and cocompleteness. We follow the modern treatments \cite{Hirschhorn} and \cite{Hovey} rather than original Quillen's work, \cite{Quillen}, and demand that the underlying category be complete and cocomplete with respect to all small limits and colimits and not just with respect to the finite ones. On the other hand we do allow the factorizations to be not necessarily functorial, as in Quillen's original definitions and unlike the modern treatments of the subject.

Prior to this result, we have shown with Dwyer,\cite{Chorny-Dwyer}, that the category of maps of spaces with the equivariant model structure is the Quillen equivalent of the category of \emph{small} presheaves indexed by the category spaces (small contravariant functors, which are left Kan extensions from some small subcategory of spaces), since the category of orbits in this case is equivalent to the category of spaces. The technical advantage of the category of maps of spaces, as opposed to the $\cat D$-shaped diagrams of spaces for  arbitrary $\cat D$, is that the category of orbits $\cat O \cong \cat S$ is complete, implying that the category of small presheaves $\pre S$ is also complete by a theorem of B.~Day and S.~Lack, \cite{Lack}. $\pre {\cat O_{\cat D}}$ need not be complete in general, and we use relative presheaves in order to overcome this and other technical difficulties.

The paper is organized as follows. Relative presheaf categories are introduced and discussed in Section~\ref{Relative-section}. In particular, we give sufficient conditions for the existence of the relative model structure and show that it is well-defined, up to equivalence of $\infty$-categories. In Section~\ref{Main-section} we recall the definition of the collection of orbits as it appeared in \cite{Farjoun} and prove our main result, establishing the Quillen equivalence between a model structure defined by a collection $\cat O$ of orbits and the category of $\cat O$-relative presheaves. Section~\ref{Application-section} is devoted to proving that the equivariant model structure on the category of diagrams of spaces is Quillen equivalent to the category of relative presheaves, generalizing \cite{Chorny-Dwyer}. The paper concludes with further examples of relative presheaf categories, which have appeared in recent papers \cite{Duality}, \cite{Blanc-Chorny}, \cite{ClassHomFun}.

\subsection*{Acknowledgments} The idea of using the category of small functors in this context was suggested by E.~Dror Farjoun in the introductory section of \cite{Farjoun}. In this work we implement some of Farjoun's ideas and  are grateful to him for generously sharing them. Marta Bunge and Ross Street made valuable comments concerning the early versions of this manuscript. The hospitality of IMUB (Barcelona) is warmly acknowledged, for this work was finished during the author's visit there.

\section{Relative presheaf categories}\label{Relative-section}
We introduce relative presheaves in this section, offering a solution to the following problem: if \cat M is a complete simplicial category and $\cat O \subset \cat M$ is a full subcategory, which is not necessary complete, find a way to speak about a model structure on \pre O, even though \pre O is not complete. The motivating example is the generalized orbit categories in the sense of Farjoun, \cite{Farjoun}, see also \ref{orbits} for the precise definition. 

As a category, relative presheaves $\relpre M{}O = \pre M = \cat S^{\cat M^{\op}}$ are just the small presheaves over \cat M, but the model structure is ``relative to'' \cat O. Then the category of relative presheaves is complete, since \cat M is complete, \cite{Lack}. If the category of orbits $\cat O$ is small, then the category of relative simplicial presheaves with respect to \cat O is Quillen equivalent to \pre O by the theorem \cite[3.1]{DK} of Dwyer and Kan. 

Still, we can ask to what extent the category of relative presheaves depends on the complete category containing \cat O. In this section we show that it is independent of the choice of embedding, at least up to an equivalence of the corresponding simplicial categories. We were unable to show the existence of a Quillen equivalence.

Since there are other possible applications for the relative presheaves outside of the context offered by the concept of an orbit category, we stop using the suggestive notation and study abstract relative presheaves in this section.

\begin{definition}
Let $\cat C$ be a simplicial category and let $\cat D$ be a full simplicial subcategory of $\cat C$. Denote by $\pre C$ the category of small simplicial presheaves on \cat C, i.e., the category of small contravariant functors $F\colon \cat C^{\op}\to \cat S$. A natural transformation $f\colon F\to G$ between small presheaves $F,G\in \pre C$ is a \cat D-\emph{weak equivalence} (resp., \cat D-\emph{fibration}) if for all $D\in \cat D$ the induced maps $f(D)\colon F(D)\to G(D)$ are weak equivalences (resp., fibrations) of simplicial sets.
\end{definition}

The goal of this section is to work out the conditions on \cat C and \cat D such that there would  exist a model structure on $\pre C$ with \cat D-equivalences as weak equivalences and 
\cat D-fibrations as fibrations.

\begin{definition}
Let $\cat C$ be a simplicial category and let $\cat D$ be a full simplicial subcategory of $\cat C$. Suppose that the \cat D-weak equivalences and \cat D-fibrations form a model category on \pre C with cofibrations defined by the left lifting property, then this model structure is called the \cat D-\emph{relative} model structure and denoted by $\pre{C, D}$.
\end{definition}

\begin{example}
	\begin{enumerate}
	\item If $\cat C=\cat D$, then we obtain the concept of the projective model structure constructed by A.K.~Bousfield and Kan for every small simplicial category, \cite{B-K}, and by the author and Dwyer for the case where \cat C is a (large) complete simplicial category, \cite{Chorny-Dwyer}.
	\item If \cat C is a small simplicial category with a full subcategory \cat D, then $\pre{C, D}$ with the relative model structure is Quillen equivalent to \pre D with the projective model structure by \cite[3.1]{DK}. P.~Balmer and M.~Matthey used this model category (also for non-simplicial categories \cat C) in order to introduce the concept of codescent, \cite{Balmer-Matthey-codescent-I}, which, in turn, was used in the reformulation of Baum-Connes and Farrell-Jones conjectures in model categorical terms, \cite{Balmer-Matthey-reformulation}.
	\item If $\cat C = \Sp^{\op}$ the dual of the category of symmetric spectra and $\cat D\subset \cat C$ the subcategory of fibrant spectra, i.e., cofibrant in $\cat C^{\op}$, then the resulting relative model structure $\pre{C, D}$ is the fibrant-projective model structure constructed in our work with G.~Biedermann, \cite{Duality}.
	\end{enumerate} 
\end{example}

Let us denote by $I_{\cat S}$ and $J_{\cat S}$ respectively the generating cofibrations and the generating trivial cofibrations of simplicial sets \cat S. Given the definition of \cat D-weak equivalences and \cat D-fibrations, we can define the classes of generating cofibrations and generating trivial cofibrations 

\begin{align*}
I_{\cat D} = \{R_{D}\otimes (K\cofib L)\,|\, D\in \cat D, (K\cofib L) \in I_{\cat S} \},\\
J_{\cat D} = \{R_{D}\otimes (K\trivcofib L)\,|\, D\in \cat D, (K\trivcofib L) \in J_{\cat S} \},
\end{align*}
so that \cat D-fibrations are precisely $J_{\cat D}\inj$ and \cat D-trivial fibrations are $I_{\cat D}\inj$ by the standard adjunction argument.

\begin{definition}\label{local-smallness}
Let \cat C be a category and let $\cat D\subset \cat C$ be a full subcategory. \cat D is called a \emph{locally small} subcategory of \cat C if for every  $C\in\cat C$ there exists a \emph{set} of objects $\cat W_{C}$ in \cat D such that for every $D\in \cat D$ there is a factorization of every map $D\to C$ through an object in $\cat W_C$. If $\cat C$ is a simplicial category, then we say that a full locally small subcategory \cat D of \cat C is locally small if the underlying category $\cat D_0$ of \cat D is locally small in the underlying category $\cat C_0$ of \cat C. 
\end{definition}

\begin{example}
\begin{enumerate}
\item 
Every small subcategory is locally small;
\item
Collection of orbits in the category of diagrams of spaces, introduced in \cite{DZ}, is locally small, see Proposition~\ref{orbits-locally-small}; 
\item
In a combinatorial model category \cat M, every set \cat A of cofibrant objects defines the class $C(\cat A)$ of \cat A-colocal objects, which is locally small, since for every $M\in \cat M$ the  cellularization of $M$ may be taken as $\cat W_M=\{CW_{\cat A}M\}$;
\item
If \cat K is a (possibly large) simplicial category, then the full image of \cat K in \pre K under the Yoneda embedding is a locally small subcategory of \pre K by \cite[3.5]{Chorny-Dwyer}.
\end{enumerate}
\end{example}

\begin{remark}
Unfortunately there is no widely accepted terminology for the concept that we call, following \cite{Farjoun}, a locally small subcategory. No confusion with the standard categorical concept of a locally small category is possible, since we do not consider categories with large hom-sets in this paper. In a more categorically oriented work the same concept would be called a subcategory satisfying the \emph{cosolution set condition} or a \emph{cone-coreflective} subcategory, \cite{Chorny-Rosicky-2, Chorny-Rosicky-1}.
\end{remark}

\begin{lemma}\label{transitivity-of-local-smallness}
Let \cat E be a category and let $\cat C \subset \cat E$ be a locally small subcategory. Suppose $\cat D\subset \cat C$ is a locally small subcategory of \cat C. Then $\cat D\subset \cat E$ is locally small.
\end{lemma}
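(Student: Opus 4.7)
The plan is to chain the two factorization systems provided by the two local smallness hypotheses. Given $E\in\cat E$, I want to produce a set $\cat U_E$ of objects of $\cat D$ through which every map $D\to E$ with $D\in\cat D$ can be factored.

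First I would invoke local smallness of $\cat C\subset \cat E$ to get, for the fixed $E\in \cat E$, a set $\cat V_E\subset \cat C$ such that every map $C\to E$ with $C\in \cat C$ factors through some object of $\cat V_E$. Next, since each $V\in \cat V_E$ lies in $\cat C$ and $\cat D\subset \cat C$ is locally small, I would choose for each such $V$ a set $\cat W_V\subset \cat D$ witnessing local smallness of $\cat D$ in $\cat C$ at the object $V$. Then I would define
\[
\cat U_E \;=\; \bigcup_{V\in \cat V_E} \cat W_V,
\]
which is a small union of sets, hence a set of objects of $\cat D$.

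The verification is then straightforward: given any morphism $f\co D\to E$ with $D\in\cat D$, regard $D$ as an object of $\cat C$ (using $\cat D\subset \cat C$) and apply local smallness of $\cat C\subset\cat E$ to factor $f$ as $D\to V\to E$ for some $V\in\cat V_E$. Now the first leg $D\to V$ is a map in $\cat C$ from an object of $\cat D$ to $V$, so by the choice of $\cat W_V$ it factors as $D\to W\to V$ with $W\in \cat W_V\subset \cat U_E$. Composing yields a factorization $D\to W\to V\to E$ of $f$ through $W\in \cat U_E$, as required.

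There is essentially no obstacle here; the only point to check is the set-theoretic one, namely that $\cat U_E$ really is a set. That is ensured because $\cat V_E$ is a set and each $\cat W_V$ is a set, so $\cat U_E$ is a set-indexed union of sets. If the categories are simplicial, one applies the argument to the underlying categories $\cat D_0\subset \cat C_0\subset \cat E_0$, in accordance with the simplicial clause of Definition~\ref{local-smallness}.
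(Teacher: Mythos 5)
Your argument is correct: chaining the witnessing set $\cat V_E$ for $\cat C\subset\cat E$ with the sets $\cat W_V$ for $\cat D\subset\cat C$ and taking the union is exactly the intended argument, and the set-theoretic point (a set-indexed union of sets) is the only thing to check. The paper simply leaves this proof to the reader, so your write-up supplies precisely the omitted details, including the correct reduction to underlying categories in the simplicial case.
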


\begin{proof}
Left to the reader.
\end{proof}

\begin{proposition}
Let \cat C be a complete simplicial category and \cat D a full locally small subcategory of \cat C. Then the category of small simplicial presheaves may be equipped with the $\cat D$-relative model structure $\pre {C, D}$.
\end{proposition}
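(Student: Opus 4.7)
The plan is to verify the model category axioms directly, using the recognition theorem for (class-)cofibrantly generated model structures, with the local smallness of $\cat D$ doing the essential set-theoretic work to make an otherwise class-sized small object argument go through.

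First, I would note that $\pre C$ is complete by the Day--Lack theorem \cite{Lack} (applied to the complete category $\cat C$), and cocomplete because small presheaves are always closed under colimits. The two-out-of-three and retract axioms for $\cat D$-weak equivalences and $\cat D$-fibrations are immediate, since both classes are defined pointwise on simplicial sets and the corresponding axioms hold in $\cat S$. The standard adjunction argument, using the Yoneda isomorphism $\Hom_{\pre C}(R_D\otimes K,F)\cong \Hom_{\cat S}(K,F(D))$ for $D\in\cat D$, $K\in \cat S$, and $F\in \pre C$, shows that $J_{\cat D}\inj$ is exactly the class of $\cat D$-fibrations and $I_{\cat D}\inj$ is exactly the class of $\cat D$-trivial fibrations. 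Hence once factorizations exist, the lifting axioms follow: cofibrations are by definition the left lifts against $\cat D$-trivial fibrations, and trivial cofibrations are characterized via retracts of relative $J_{\cat D}$-cell complexes.

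The main work is therefore the construction of the two factorizations. Given a morphism $f\colon F\to G$ of small presheaves, pick a small subcategory $\cat C_0\subset \cat C$ such that both $F$ and $G$ are left Kan extensions of their restrictions to $\cat C_0$. For each $C\in \cat C_0$, local smallness of $\cat D$ in $\cat C$ provides a set $\cat W_C\subset \cat D$ witnessing the factorization condition of Definition~\ref{local-smallness}; set $\cat W_0=\bigcup_{C\in \cat C_0}\cat W_C$, a set. I would then run Quillen's transfinite small object argument with respect to the \emph{set} $I_{\cat W_0}=\{R_W\otimes(K\cofib L)\mid W\in \cat W_0,\ (K\cofib L)\in I_{\cat S}\}$ (and analogously with $J_{\cat W_0}$), enlarging the set $\cat W_\alpha$ at each successor stage by applying local smallness to the (still small) subcategory on which the intermediate presheaf $F_\alpha$ is supported. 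Since each enlargement adjoins only a set of objects, $\cat W_\alpha$ remains a set throughout, and the cardinality estimates on $R_W\otimes K$ allow the usual termination argument to go through. The resulting factorization $F\to F_\infty\to G$ satisfies the desired right lifting property against the \emph{full} class $I_{\cat D}$ (respectively $J_{\cat D}$) by the following key observation: any lifting square with domain $R_D\otimes K$, $D\in\cat D$, corresponds under Yoneda to data classified by a morphism $D\to C$ with $C$ in the small subcategory supporting $F_\infty$, hence factors through some $R_W\otimes K$ with $W\in\cat W_\infty$, where the lift is provided by construction.

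The main obstacle I expect is the bookkeeping in the transfinite argument: one must ensure that the sequence of sets $\cat W_\alpha$, and the sequence of small subcategories supporting the intermediate presheaves, can be bounded by a single regular cardinal large enough both to make the smallness hypotheses on the domains $R_W\otimes K$ effective and to accommodate all the ``extensions by local smallness'' needed during the construction. Once this size estimate is in place, the factorizations are well-defined, and together with the earlier steps they assemble into a proof that the $\cat D$-weak equivalences, $\cat D$-fibrations, and cofibrations defined by the left lifting property form the $\cat D$-relative model structure $\pre{C,D}$.
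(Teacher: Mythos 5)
Your overall skeleton --- a class-sized small object argument whose set-theoretic engine is local smallness --- is the right machinery; it is essentially what the paper obtains by citation, since its proof just checks that the representables $\cat O=\{R_D\mid D\in\cat D\}$ form a locally small collection of orbits in $\pre C$ (via \cite[3.5]{Chorny-Dwyer} and Lemma~\ref{transitivity-of-local-smallness}) and then invokes Farjoun's existence theorem \cite[1.3]{Farjoun}. However, your execution has a genuine gap at the decisive step, namely the reduction of the proper class of lifting problems to a set at each stage. Your ``key observation'' is not correct as stated: a map $R_D\otimes K\to F_\alpha$ corresponds by the enriched Yoneda lemma to a simplicial map $K\to F_\alpha(D)$, not to a single morphism $D\to C$ into the supporting small subcategory. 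Writing $F_\alpha$ as a left Kan extension only shows that each individual simplex of $F_\alpha(D)$ is classified by some morphism $D\to C$, and different simplices may use different $C$'s; so the given square need not factor through a single $R_W\otimes(K\cofib L)$ via a single map $D\to W$. What the transfinite argument actually requires is a cosolution set for entire squares (the maps into $F_\alpha$ and into $G$ must factor compatibly through one map $R_D\to R_W$), and local smallness of $\cat D$ in $\cat C$, applied pointwise to objects of the subcategory supporting $F_\alpha$, does not deliver this by itself. Without it, the claim that $F\to F_\infty\to G$ has the right lifting property against all of $I_{\cat D}$ (not just $I_{\cat W_\infty}$) is unjustified.

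The standard repair, which is in essence what the cited machinery does, is to encode a square as a single map out of a representable: since $\pre C$ is complete by \cite{Lack}, the square from $R_D\otimes K\cofib R_D\otimes L$ to $F_\alpha\to G$ is adjoint to one map $R_D\to P$, where $P=F_\alpha^{L}\times_{G^{L}}G^{K}$ is the relevant pullback-corner object, again a small presheaf. Any map from a representable into a small presheaf factors through a representable $R_C$ with $C$ in a small subcategory supporting $P$ (this is the content of \cite[3.5]{Chorny-Dwyer}), and then local smallness of $\cat D$ in $\cat C$ (Definition~\ref{local-smallness}) factors $D\to C$ through some $W\in\cat W_C$, so the whole square factors through the corresponding square for $R_W$. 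With this cosolution set at each stage your construction and termination argument do go through (all domains are $\aleph_0$-small, since evaluation at $D$ preserves filtered colimits of small presheaves and the domains of $I_{\cat S}$, $J_{\cat S}$ are finite). You should also record explicitly that pushouts and transfinite compositions of maps in $J_{\cat D}$ are objectwise trivial cofibrations --- evaluate at $D'\in\cat D$ and use $(R_D\otimes L)(D')=\hom_{\cat C}(D',D)\times L$ --- since this is what makes the $(J_{\cat D}\text{-cell},\ \text{fibration})$ factorization and the lifting axiom for trivial cofibrations against fibrations work. Alternatively, you can avoid rebuilding the argument altogether, as the paper does, by verifying Q1--Q3 for $\cat O$ in $\pre C$ (they follow from Yoneda) together with local smallness, and quoting \cite[1.3]{Farjoun}.
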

\begin{proof}
The relative model structure on the category of small presheaves is a particular case of the model structure determined by a (possibly large) collection of orbits, \cite[1.3]{Farjoun}.

$\pre{C, D}= \pre C$ as a category is complete by \cite[3.9]{Lack}. The full subcategory $\cat O = \{R_D \, |\, D\in\cat D \}$ of functors representable by the objects of $\cat D$ satisfies all the axioms required from a collection of orbits Q0-Q3, \cite{DK, Farjoun}. We need only verify that \cat O is locally small in \cat C. But the collection of all representable functors $Y\cat C= \{R_C \,|\, C\in\cat C\}$ is locally small in $\pre C$ by \cite[3.5]{Chorny-Dwyer}, and \cat O is  equivalent to \cat D. It is locally small in $Y\cat C \cong \cat C$ as \cat D is locally small in \cat C. By Lemma~\ref{transitivity-of-local-smallness} the category of orbits \cat O is locally small in \pre{C,D}.
\end{proof}

This same category \cat D could be embedded as a locally small subcategory into two different categories, say $\cat C_1$ and $\cat C_2$. For example, if \cat D is a locally small subcategory \cat C, then it is also a locally small subcategory of \pre C. In such a situation it is reasonable to expect that the resulting \cat D-relative model categories $\relpre{C}{1}{D}$ and $\relpre C 2 D$ would be Quillen equivalents. This is the case at least for small \cat D according to Theorem \cite[3.1]{DK}, since both these model categories are Quillen equivalents to $\pre D$ with the projective model structure.

\begin{remark}\label{problems}
Unfortunately for a general large category \cat D it is impossible to use \pre D as a node for the zig-zag Quillen equivalence, like $\relpre C 1 D \to \pre D \leftarrow \relpre C 2 D$, for at least two reasons:
\begin{enumerate}
\item The category \pre D need not be complete (even with respect to finite limits; cf. \cite{Lack}) for incomplete \cat D, so it may not be called a model category;
\item The \emph{orbit point functor} (according to the terminology of \cite{Farjoun}) or the the \emph{singular functor} (according to \cite{DK}) $\cat C_1\to \ \pre D$ need not take values in small presheaves, in other words it is not defined. For example, if \cat D is a complete category, then it is a locally small subcategory of a complete category \pre D. Then the relative model category $\relpre {\pre D}{}D$ exists, but it does not admit an orbit point functor, which would be merely a restriction to \cat D in this case, since a small presheaf on $\pre{D}$ need not be small after it is restricted to \cat D. It is not clear if there is a reasonable adjunction between \pre D and $\relpre C 1 D$ preserving sufficient structure of the model categories.  
\end{enumerate}
\end{remark}

By contrast, the Dwyer-Kan equivalence, cf. \cite{DK2}, of the simplicial localizations of the model categories $\relpre C {1} D$ and $\relpre C {2} D$ with the \cat D-relative model structure may be established directly. In other words, the \cat D-relative model structure is well-defined, depending only on the category \cat D and not on the choice of an embedding into a complete category as a locally small subcategory. 

To think about these categories as elements of a model structure defined by J.~Bergner, \cite{Bergner}, or as $\infty$-categories, \cite{Lurie}, we need to change the universe so that the categories of small presheaves can be considered small. 

Recall that a simplicial functor $F\colon \cat A \to \cat B$ between two simplicial categories is a Dwyer-Kan (DK-)equivalence if
\begin{enumerate}
\item The induced map $\hom_{\cat A}(A_{1}, A_{2})\to \hom_{\cat B}(FA_{1}, FA_{2})$ is a weak equivalence of simplicial sets;
\item The functor induced on the category of components $\pi_{0}F\colon \pi_{0}\cat A \to \pi_{0}\cat B$ is an equivalence of categories. 
\end{enumerate}

\begin{proposition}\label{well-defined}
Let \cat D be a simplicial category. Suppose $i_{1}\colon \cat D \to \cat C_{1}$ and $i_{2}\colon \cat D \to \cat C_{2}$ are fully faithful embeddings of \cat D into complete simplicial categories $\cat C_{1}$ and $\cat C_{2}$. Suppose, additionally, that the images of $i_{1}$ and $i_{2}$ are locally small subcategories of $\cat C_{1}$ and $\cat C_{2}$, respectively. Then the simplicial localizations of the \cat D-relative model structures on the categories $\relpre C 1 D$ and $\relpre C 2 D$ with respect to the \cat D-equivalences are equivalent in the sense of Dwyer and Kan.
\end{proposition}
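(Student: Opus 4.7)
The plan is to realize each simplicial localization $L^H(\relpre C i D)$ as a simplicial category built intrinsically from $\cat D$, exploiting that $\cat D$-equivalences see only the restriction to $\cat D$ and that cofibrant objects are cell complexes built from representables indexed by $\cat D$. Concretely, I shall construct, for $i=1,2$, a homotopically fully faithful simplicial functor from the cofibrant--fibrant subcategory of $\relpre C i D$ into the simplicial category $\cat S^{\cat D^{\op}}$ of presheaves on $\cat D$, and then verify that the two functors share the same essential image.

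By the theorem of Dwyer and Kan, the simplicial localization $L^H(\cat M)$ of any model category $\cat M$ is DK-equivalent to its full simplicial subcategory $\cat M^{cf}$ of cofibrant--fibrant objects; it therefore suffices to exhibit a DK-equivalence $\cat M_1^{cf}\simeq \cat M_2^{cf}$, where $\cat M_i:=\relpre C i D$. For each $i$ consider the restriction $r_i\colon \cat M_i\to\cat S^{\cat D^{\op}}$ given by $r_iF=F\circ i_i$. As left adjoint to the right Kan extension along $i_i$, $r_i$ is cocontinuous; and by the full faithfulness of $i_i$, $r_iR_{i_i(D)}=R_D$ for every $D\in\cat D$. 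These two facts together imply that $r_i$ sends any $\cat D$-cell complex in $\cat M_i$, built from generators $R_{i_i(D)}\otimes(\partial\Delta^n\cofib \Delta^n)$, to a $\cat D$-cell complex in $\cat S^{\cat D^{\op}}$ of identical cellular type. For cofibrant $F$ and fibrant $G$ in $\cat M_i$, the simplicial mapping space $\Map_{\cat M_i}(F,G)$ is computed, via tensor--Yoneda and the cell decomposition of $F$, by a canonical limit expression in the values $G(i_i(D))=(r_iG)(D)$ for $D\in\cat D$; the same formula yields $\Map_{\cat S^{\cat D^{\op}}}(r_iF,r_iG)$, so $r_i$ is homotopically fully faithful on $\cat M_i^{cf}$. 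Essential surjectivity onto the subcategory of $\cat D$-cell complexes with objectwise--Kan values follows by left Kan extension $\Lan_{i_i}$, which lands in small presheaves because each cell complex involves only a set of objects of $\cat D$. Since this image depends only on $\cat D$, the zig-zag $\cat M_1^{cf}\xrightarrow{r_1}\cat S^{\cat D^{\op}}\xleftarrow{r_2}\cat M_2^{cf}$ through a common essential image provides the desired DK-equivalence.

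The main obstacle is the verification that $r_i$ is homotopically fully faithful on all of $\cat M_i^{cf}$, not merely on representables. The proof proceeds by transfinite induction along the cell structure of a cofibrant object: one must check that restriction is compatible with the enriched pushouts defining cell attachments and with the limit expression for the mapping space out of a cell complex, both of which reduce to the cocontinuity of $r_i$ and its behavior on representables. A secondary technicality, already flagged by the author, is that $r_i$ need not preserve smallness of presheaves, so the target $\cat S^{\cat D^{\op}}$ must be read in a sufficiently enlarged universe --- a harmless adjustment since only mapping spaces and DK-equivalence are at stake.
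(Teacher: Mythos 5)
Your argument is correct and is essentially the paper's proof: the paper likewise reduces to cellular fibrant objects, uses that restriction along $i_1$ followed by simplicial left Kan extension along $i_2$ preserves the $\cat D$-cell structure, computes mapping spaces by the same Yoneda/adjunction identification through presheaves on $\cat D$, and handles $\pi_0$ via $\Lan_{i_1}i_2^*$. The only cosmetic difference is that you phrase it as a zig-zag of restrictions into $\cat S^{\cat D^{\op}}$ (in an enlarged universe) with a common essential image, whereas the paper composes these into the single functor $F=\Lan_{i_2}i_1^*\colon \relpre C1D\to\relpre C2D$.
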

\begin{proof}
To establish a DK-equivalence of two model categories, it suffices to consider their subcategories of fibrant and cofibrant objects, or even cellular fibrant objects. 

Suppose  $X$ is a cellular space $X= \colim _{i<\lambda} X_{i}$, such that $X_{0}=\emptyset$ and if $X_{i}$ is defined, then $X_{i+1}$ is obtained as a pushout: 
\[
\xymatrix{
K\otimes R_{D}^{\cat C_{1}}
\ar[r]
\ar@{^{(}->}[d] 
							 &  X_{i}
							      \ar[d]\\
L\otimes R_{D}^{\cat C_{1}}
\ar[r]
							&  X_{i+1},\\
}
\]
where $K\cofib L$ is a map from $I_{\cat S}$ and $R_{D}^{\cat C_{1}}=\hom_{\cat C_{1}}(-, D)$ for some $D\in \cat D$. 

For a cellular complex built out of \cat D-cells, the standard way to go from $\relpre C1D$ to $\relpre C2D$ through \pre D (using the singular functor composed with the realization functor defined by Dwyer and Kan, \cite{DK}) preserves the cellular structure. In the current situation we can simply put $FX=Y=\Lan_{i_{2}}i_1^{*} X \in \relpre C2D$, where $\Lan_{i_{2}}$ is the simplicial left Kan extension. We obtain a cellular complex $Y= \colim _{i<\lambda} Y_{i}$, such that $Y_{0}=\emptyset$ and if $Y_{i}$ is defined, then $Y_{i+1}$ is obtained as a pushout: 
\[
\xymatrix{
K\otimes R_{D}^{\cat C_{2}}
\ar[r]
\ar@{^{(}->}[d] 
							 &  Y_{i}
							      \ar[d]\\
L\otimes R_{D}^{\cat C_{2}}
\ar[r]
							&  Y_{i+1},\\
}
\]
where $K\cofib L$ is a map from $I_{\cat S}$ and $R_{D}^{\cat C_{2}}=\hom_{\cat C_{2}}(-, D)$ for $D\in \cat D$. 

To verify that $F$ induces a DK-equivalence of the simplicial localizations of our model categories we have to verify that the maps induced by $F$ on the homotopy function complexes are weak equivalences for all cellular fibrant objects $X,\, X'\in \relpre C1D$:
\begin{multline*}
\hom^{h}_{}(X,X')\simeq \hom_{\relpre C1D}( X, X')\cong \hom_{\relpre C1D}( \Lan_{i_{1}}i_1^*X, \Lan_{i_{1}}i_1^*X')\\ 
\cong \hom_{\pre D}(i_{1}^{*}X_{1}, i_{1}^{*}X_{2})\cong \hom_{\relpre C2D}( \Lan_{i_{2}}i_1^*X, \Lan_{i_{2}}i_1^*X')\\
\simeq \hom^{h}_{\relpre C1D}(FX, FX').
\end{multline*}

In addition, we need to verify that $\pi_{0}F=\pi_{0}\Lan_{i_{2}}i_{1}^{*}$ is an equivalence of the category of components, which are the homotopy categories in this situation. We notice that the functor $\pi_0 G = \pi_0 \Lan_{i_1}i_2^*$ is the inverse of $\pi_0 F$.
\end{proof}

\section{Singular functors and realization functors}\label{Main-section}
In this section we recall the definition of a generalized orbit category, \cite{Farjoun}, and prove our main result generalizing Theorem~\cite[3.1]{DK}.  

\subsection{Collections of orbits}\label{orbits} Let \cat M be a complete and cocomplete simplicial category, tensored and cotensored over simplicial sets (alternatively, we can say that \cat M is complete and cocomplete in the enriched sense, i.e., with respect to weighted limits and colimits). We say that a class $\cat O=\{O_{e}\}_{e\in E}$ of objects of \cat M is a collection of orbits for \cat M if, in addition, the following axioms hold:

\subsection*{Q1} If 
\[
\xymatrix{
K\otimes O_{e}
\ar[r]
\ar@{^{(}->}[d] 
							 &  X_{a}
							      \ar[d]\\
L\otimes O_{e}
\ar[r]
							&  X_{a+1}\\
}
\]
is a pushout diagram in \cat M in which $K \cofib L$ is an element of $I_{\cat S}$ and $e\in E$, then, for every $e'\in E$, the induced diagram in \cat S
\[
\xymatrix{
\hom(O_{e'}, K\otimes O_{e})
\ar[r]
\ar@{^{(}->}[d] 
							 &  \hom(O_{e'}, X_{a})
							      \ar[d]\\
\hom(O_{e'}, L\otimes O_{e})
\ar[r]
							& \hom(O_{e'},  X_{a+1})\\
}
\]
is a homotopy pushout.

\subsection*{Q2} If $X_{0}\to\cdots\to X_{a}\to X_{a}\to \cdots$ is a possibly transfinite sequence of objects and maps in \cat M such that each map $X_{a}\to X_{a+1}$ is induced as in Q1 and such that, for every limit ordinal $b$ involved, one has $X_{b}= \colim_{a<b} X_{a}$, then, for every $e\in E$, the induced map
\[
\colim_{a} \hom(O_{e}, X_{a})\to \hom(O_{e}, \colim_{a}X_{a})\in \cat S
\]
is a weak equivalence.

\subsection*{Q3} The class \cat O is locally small in \cat M and there is a limit ordinal $c$ such that for every sequence $X_{1}\to\cdots\to X_{a}\to X_{a+1}\to\cdots$ as in Q2 which is indexed by the ordinals $<c$ and for every $e\in E$, one has:
\[
\colim_{a} \hom(O_{e}, X_{a})\to \hom(O_{e}, \colim_{a}X_{a})\in \cat S.
\]

These axioms generalize the similar concept of Dwyer and Kan, \cite[2.1]{DK}, and were presented for the first time by Farjoun, \cite{Farjoun}. The first theorem of Dwyer and Kan states that a complete and cocomplete simlicial category \cat M equipped with a \emph{set} of orbits has a model structure, with weak equivalences and fibrations being the maps $f\colon A\to B$ such that $\hom(O_{e},f)\colon \hom(O_{e},A)\to \hom(O_{e},B)$ is a weak equivalence or a fibration, respectively, for all $e\in E$, \cite[Theorem~2.2]{DK}. This theorem was generalized to proper classes of orbits satisfying the modified Q3 axiom (the modification is that \cat O is locally small in \cat M) by Farjoun, \cite[Proposition~1.3]{Farjoun}, establishing the existence of an analogous model structure on \cat M, such that the weak equivalences and fibrations are determined by a possibly proper class of conditions. We call this model structure the \cat O-model structure on \cat M.

The second theorem of Dwyer and Kan, \cite[Theorem~3.1]{DK}, was used numerous times but it had resisted full generalization to the class case so far. It states that \cat M with the model structure defined by \cat O is Quillen-equivalent to the projective model structure on \pre O. In a very specific situation with $\cat M=\cat S^{[2]}$ the category of maps of spaces and $\cat O=\left\{\left.\orbit A \, \right |\, A\in \cat S\right\}\cong \cat S$ the locally small subcategory of orbits was shown to be Quillen equivalent to \pre O in \cite{Chorny-Dwyer}. 

The main result of this work is to generalize \cite[3.1]{DK} resolving the obstacles similar to those enumerated in Remark~\ref{problems} by showing that \cat M with the model structure defined by \cat O is Quillen equivalent to the \cat O-relative model category $\relpre M{}O$.
 
\begin{theorem}\label{main-theorem}
Let \cat M be a complete and cocomplete simplicial category. And let \cat O be a category of orbits for \cat M satisfying Q1-Q3. Then the \cat O-model structure on \cat M is Quillen equivalent to the \cat O-relative model category $\relpre M{}O$.
\end{theorem}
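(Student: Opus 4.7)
The plan is to establish the Quillen equivalence via the adjunction with left adjoint $R\colon \relpre M{}O \to \cat M$ and right adjoint $Y\colon \cat M \to \relpre M{}O$, where $Y$ is the Yoneda embedding $Y(M)=\hom_{\cat M}(-,M)$ and $R$ is the unique cocontinuous functor determined by $R(R_D)=D$ on representables; concretely, expressing a small presheaf $F$ as a small colimit of representables $F\cong\colim_i R_{A_i}$, one sets $R(F)=\colim_i A_i$, which is well-defined since $\cat M$ is cocomplete. First I would check that $(R,Y)$ is a Quillen pair by verifying that $Y$ preserves fibrations and trivial fibrations. This is immediate: a morphism $f\colon M\to M'$ is a fibration (resp.\ trivial fibration) in the $\cat O$-model structure on $\cat M$ iff $\hom(O_e,f)$ is one in $\cat S$ for every $e\in E$, which is exactly the condition for $Y(f)$ to be a $\cat O$-fibration (resp.\ $\cat O$-trivial fibration) in $\relpre M{}O$. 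The same equivalence of definitions shows that $Y$ both preserves and reflects weak equivalences.

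Consequently, to show the adjunction is a Quillen equivalence it suffices to prove that the unit $\eta_F\colon F\to YR(F)$ is a $\cat O$-weak equivalence for every cofibrant $F\in\relpre M{}O$. Indeed, given this, for cofibrant $F$ and fibrant $M$, a map $\phi\colon R(F)\to M$ is a weak equivalence iff $Y(\phi)$ is one (since $Y$ preserves and reflects weak equivalences), and by the 2-out-of-3 property applied to the adjoint $\psi=Y(\phi)\circ\eta_F\colon F\to Y(M)$, this is equivalent to $\psi$ being a weak equivalence.

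The technical heart is verifying the unit condition for cofibrant $F$. A cofibrant object is a retract of a transfinite cellular object $F=\colim_{a<\lambda}F_a$ with $F_0=\emptyset$, where $F_{a+1}$ is obtained from $F_a$ by attaching cells of the form $R_D\otimes(K\hookrightarrow L)$ with $D\in\cat O$ and $(K\hookrightarrow L)\in I_{\cat S}$; since $R$ is cocontinuous, $R(F)$ has the matching cellular presentation in $\cat M$. I would proceed by transfinite induction, evaluating $\eta_F$ at an arbitrary $O_{e'}\in\cat O$. At a successor stage, colimits in $\relpre M{}O$ are computed objectwise, so $F_{a+1}(O_{e'})$ is a strict pushout of simplicial sets along a cofibration and hence a homotopy pushout; axiom Q1 identifies the corresponding attachment in $\cat M$, after applying $\hom(O_{e'},-)$, with a homotopy pushout, and the gluing lemma for homotopy pushouts together with the inductive hypothesis delivers the successor step. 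At a limit stage, axiom Q2 ensures that $\hom(O_{e'},-)$ commutes up to weak equivalence with the sequential colimit on the $\cat M$-side, matching the objectwise colimit on the presheaf side.

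The main obstacle I expect is the careful management of the transfinite induction. The local smallness axiom Q3, together with the adapted small-object argument used in Farjoun's construction of the $\cat O$-model structure, is needed to bound cellular presentations by an ordinal to which Q2 applies, and to justify passing from cellular presentations to general cofibrant objects via retracts. A secondary subtlety is precisely the reason we work with $\relpre M{}O$ rather than the possibly ill-defined $\pre O$: the Yoneda functor $Y$ lands in small presheaves because every representable is small, and $R$ is well-defined because small presheaves on $\cat M$ are by definition small colimits of representables, sidestepping the completeness problems foreshadowed in Remark~\ref{problems}.
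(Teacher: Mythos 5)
Your proposal follows essentially the same route as the paper: the right adjoint is the Yoneda embedding into small presheaves, the left adjoint is its cocontinuous extension (which the paper writes as the coend $\Id_{\cat M}\otimes_{\cat M}(-)$, the same functor as your colimit-of-representables description), the Quillen pair condition is immediate because fibrations and weak equivalences on both sides are detected by the orbits, and the equivalence is reduced to showing the unit is a weak equivalence on cofibrant objects, proved by transfinite cellular induction using Q1 at attachment stages, Q2 at limit stages, and Q3 to control the small object argument. Your write-up is correct and, if anything, slightly more explicit than the paper's sketch about the successor/limit bookkeeping and the retract step.
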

\begin{proof}
Consider the Yoneda embedding functor $Y\colon \cat M\to \pre M$. Since $\cat M$ is cocomplete, $Y$ has a left adjoint, $Z\colon \pre M \to \cat M$, simply a coend construction: $Z(A)= \Id_{\cat M}\otimes_{\cat M}A$, for all $A\in \pre M$. Notice that $Z(A)$ is defined for all small $A$, since it may be rewritten as a left Kan extension of a functor defined on a small subcategory of $\cat M^{\op}$, thus allowing one to compute the above coend over a small category. It is a routine to verify that the two functors are adjoint:
\[
\hom_{\cat M}(\Id_{\cat M}\otimes A, X) \cong \hom(A,X)^{\Id_{\cat M}}\cong \hom(A,X^{\Id_{\cat M}})\cong \hom_{\pre M}(A,Y(X)).
\]

This adjunction is a Quillen pair, since the right adjoint $Y$ preserves both fibrations and trivial fibrations by definition of \cat O-model structure on \cat M and the \cat O- relative model structure on $\pre M = \relpre M{}O$.

It remains to show that the adjunction above is a Quillen equivalence. In other words, for all cofibrant $A\in \relpre M{}O$ and fibrant $X\in \cat M$ a map $A\to YX$ is a weak equivalence if and only if the adjoint map $\Id_{\cat M}\otimes A\to X$ is a weak equivalence in \cat M.

Notice that a map $X\to X'$ is a weak equivalence in \cat M if and only if $YX\to YX'$ is a weak equivalence in $\relpre M{}O$.  But the map $A\to YX$ factors through the unit of the adjunction: $A\to Y(\Id_{\cat M}\otimes A)\to YX$. Therefore it suffices to verify that for every cofibrant $A\in \relpre M{}O$ the unit map $A\to Y(\Id_{\cat M}\otimes A)$ is a weak equivalence.

This verification is performed by cellular induction on the skeleton of $A$. For representable functors it follows from the dual Yoneda lemma, while the attachment of cells requires the axioms Q1 and Q2 to prove that the weak equivalence persists through the whole construction.  
\end{proof}

\section{Application and examples}\label{Application-section}
Dwyer and Kan applied their main theorem, \cite[3.1]{DK}, to construction of a Quillen equivalence between the Bredon equivarint homotopy theory of spaces with a group action and the category of simplicial presheaves over the orbit category (the category of homogeneous spaces in this case), \cite[1.2]{DK}.

Farjoun applied the generalization of the concept of orbits in a model category to construct the equivariant model structure on the category of diagrams of spaces, \cite[2.2]{Farjoun}, corresponding to the generalization of Bredon's equivariant homotopy theory to the arbitrary diagram case, previously developed by Farjoun and Zabrodsky, \cite{DZ}.

\subsection{Application} In this section we construct a Quillen equivalence between the equivariant model structure on the category of $\cat D$-shaped diagrams of simplicial sets for a small simplicial category $\cat D$ and the relative simplicial presheaves $\relpre {S^{\cat D}}{}{O_{\cat D}}$, where the orbit category $\cat O_{\cat D}\subset \cat S^{\cat D}$ is the full simplicial subcategory of all orbits in the sense of Farjoun and Zabrodsky, \cite{DZ}, i.e., the diagrams $\dgrm T\in \cat S^{\cat D}$ such that $\colim_{\cat D} \dgrm T=\ast$.

Existence of this Quillen equivalence is an immediate conclusion of Theorem~\ref{main-theorem}. Therefore we can summarize the references to the verification of the axioms Q1-Q3 for $\cat O_D$. While Q2 was proven in \cite[2]{Farjoun}, the verifications of Q1 and Q3 were left to the reader. The proof of Q1 appears in \cite[1.3]{PhDI}. Axiom Q3 consists of the local smallness condition for the category of orbits and the smallness condition for every orbit for a fixed cardinal $c$. All orbits are $\aleph_0$-small with respect to the cellular maps defined in Q2; see \cite[3.1]{PhDI} for the proof. The proof of local smallness of orbits is hidden in \cite[1.3]{Farjoun} together with a generalization of a small object argument. We would like to make it explicit for the sake of completeness of the current paper.

\begin{proposition}\label{orbits-locally-small}
For any diagram $\dgrm X \in \cat S^{\cat D}$ there exists a set of orbits $W_{\dgrm X}\subset \cat O_{\cat D}$, such that for every orbit $\dgrm T$ a map $\dgrm T \to \dgrm X$ factorizes through an element of $W_{\dgrm X}$. In other words, the collection of $\cat D$-orbits forms a locally small class.  
\end{proposition}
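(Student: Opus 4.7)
The plan is to let $W_{\dgrm X}$ consist of all sub-diagrams of $\dgrm X$ that happen to be orbits, and to show that any map $f \colon \dgrm T \to \dgrm X$ from an orbit factors through its image in $\dgrm X$, which will itself be an orbit. Observe first that $W_{\dgrm X}$ is indeed a set: since $\cat D$ is small and each $\dgrm X(d)$ is a small simplicial set, the class of sub-diagrams of $\dgrm X$ embeds into $\prod_{d \in \cat D}\mathcal{P}(\dgrm X(d))$ and hence is a set, and $W_{\dgrm X}$ is a subcollection thereof.

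Given an orbit $\dgrm T$ and a morphism $f \colon \dgrm T \to \dgrm X$, define $\dgrm T' \subseteq \dgrm X$ objectwise by $\dgrm T'(d) := \im{f(d)}$. Naturality of $f$ guarantees that for every $\phi \colon d \to d'$ in $\cat D$ the structure map $\dgrm X(\phi)$ restricts to a map $\dgrm T'(d) \to \dgrm T'(d')$, so $\dgrm T'$ is a well-defined sub-diagram and $f$ factors as a levelwise epimorphism $\dgrm T \twoheadrightarrow \dgrm T'$ followed by the inclusion $\dgrm T' \hookrightarrow \dgrm X$.

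It then remains to verify $\dgrm T' \in W_{\dgrm X}$, i.e., that $\colim_{\cat D} \dgrm T' = \ast$. Colimits in $\cat S^{\cat D}$ are computed objectwise in $\cat S$ and degreewise in $\Set$, and colimits of set-valued diagrams carry pointwise surjections to surjections; hence the induced map $\ast = \colim_{\cat D}\dgrm T \twoheadrightarrow \colim_{\cat D}\dgrm T'$ is surjective. Since $\dgrm T$ has colimit $\ast$, it is non-empty, so $\dgrm T'$ is non-empty as well, and a non-empty quotient of $\ast$ equals $\ast$. The only point requiring genuine care is this preservation of surjections by colimits, which is the standard categorical fact ensuring that the image of an orbit remains an orbit; the set-theoretic bookkeeping and the verification that $\dgrm T'$ is a sub-diagram are routine.
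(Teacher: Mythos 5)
Your argument is correct, but it takes a genuinely different route from the paper. The paper's proof indexes $W_{\dgrm X}$ by the points $x\colon \ast \to \colim_{\cat D}\dgrm X$ and takes $\dgrm T_x = \ast \times_{\colim_{\cat D}\dgrm X} \dgrm X$, the fiber over $x$; the factorization of any $\dgrm T \to \dgrm X$ is then immediate from the universal property of the pullback, since $\colim_{\cat D}\dgrm T = \ast$ forces the composite $\dgrm T \to \dgrm X \to \colim_{\cat D}\dgrm X$ (viewed over the constant diagram) to land in a single point $x$. That construction yields a smaller, more canonical indexing set (the vertices of $\colim_{\cat D}\dgrm X$), but tacitly uses that each fiber $\dgrm T_x$ is again an orbit, i.e., that colimits in $\cat S$ are universal so that $\colim_{\cat D}$ commutes with this pullback along a constant base. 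You instead take all orbit sub-diagrams of $\dgrm X$ and factor a map through its image; the only nontrivial input is that $\colim_{\cat D}$, being a left adjoint, sends pointwise surjections to surjections, so the image of an orbit is an orbit. This trades the canonical fiber description for a power-set-sized (but still small) $W_{\dgrm X}$, and replaces the universality-of-colimits argument by the more elementary preservation of epimorphisms. Two small points to tighten: since $\cat D$ is a \emph{simplicial} category, the sub-diagram $\dgrm T'(d)=\im{f(d)}$ should be checked to be a simplicial subfunctor, i.e., closed under the action maps $\hom_{\cat D}(d,d')\times \dgrm X(d)\to \dgrm X(d')$, which follows from the enriched naturality of $f$ exactly as in your unenriched computation; and the non-emptiness digression is not really needed, since the mere existence of the degreewise surjection $\ast \to \colim_{\cat D}\dgrm T'$ already forces $\colim_{\cat D}\dgrm T' \cong \ast$.
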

\begin{proof}
For every map $x\colon * \to \colim_{\cat D} \dgrm X$ let $\dgrm T_x$ be an orbit over $x$, i.e., $\dgrm T_x = \ast \times_{\colim_{\cat D} \dgrm X} \dgrm X$, where $\ast$ and $\colim_{\cat D} \dgrm X$ are considered constant diagrams over $\cat D$. Then we define $W_{\dgrm X}=\{\dgrm T_x \,|\, x\colon * \to \colim_{\cat D} \dgrm X\}$ as the collection of all orbits in \dgrm X.

The universal property of the pullback implies the required factorization property.
\end{proof}

\subsection{Examples}
The concept of the relative model structure was previously studied by Balmer and Matthey, \cite{Balmer-Matthey-codescent-I,Balmer-Matthey-codescent-II}, and served for the definition of the notion of codescent: a functor $X\in \relpre C{}D$ satisfies codescent if the cofibrant replacement map in the $\cat D$-relative model structure is a levelwise weak equivalence, i.e., for all $C\in \cat C$ the map $\tilde{X}(C)\to X(C)$ is a weak equivalence. Although the definition of codescent applies also to large categories, the prime interest of Balmer and Matthey lies in the study of the examples related to Baum-Connes and Farrell-Jones conjectures, e.g., $\cat C=\cat O_G$ for a group $G$ and $\cat D$ the subcategory of orbits with finite stabilizers. The existence of the relative model structure in this situation follows from \cite[3.1]{DK}. This theory was developed with the purpose of giving a model-theoretical reformulation of the isomorphism conjectures, \cite{Balmer-Matthey-reformulation}.

Recently several new model categories with proper classes of orbits have appeared in the study of the extension of Spanier-Whitehead duality, \cite{Duality}, and in the attempt to develop a recognition principle for the mapping algebras in model categorical terms, \cite{Blanc-Chorny}. Here is a generalized treatment of these examples.

Let \cat M be a simplicial model category, and let $\cat T\subset \cat M$ be a locally small simplicial subcategory. Then the collection $\cat O = \{R_T \,|\, T\in \cat T\}$ of representable functors forms a category of orbits for $\pre M$. The local smallness of $\cat O$ follows from Lemma~\ref{transitivity-of-local-smallness} and \cite[Lemma~3.5]{Chorny-Dwyer}. The remaining conditions Q1-Q3 follow immediately from Yoneda's lemma. Therefore $\relpre {\pre M}{}{\cat O}$ carries the \cat O-relative model structure, which is Quillen equivalent to a relative model structure on an even bigger category $\pre{\pre M}$ by Theorem~\ref{main-theorem}.

There are many examples of locally small classes of objects in a model category. For instance the subcategory of cofibrant objects or, more generally, \cat A-cellular objects for a set $\cat A\subset \cat M$, \cite{Blanc-Chorny}. If we take $\cat M = \Sp^{\op}$ and \cat T the category of fibrant spectra, then we obtain the fibrant-projective model structure on the category of small functors from spectra to spectra used in \cite{Duality} and \cite{ClassHomFun}.

\bibliographystyle{abbrv}
\bibliography{Xbib}

\end{document}